\newcommand\ext{\operatorname{Ext}}
\newcommand\fm{{\mathfrak m}}
\newcommand\fn{{\mathfrak n}}
\newcommand\me{{\mathcal{E}}}
\newcommand\mk{{\mathcal{K}}}
\newcommand\mv{{\mathcal{V}}}
\newcommand\Hom{\operatorname{Hom}}
\newcommand{\ov}{\overline}
\newcommand{\im}{\operatorname{Im}}
\newcommand{\reg}{\operatorname{reg}}
\newcommand{\rank}{\operatorname{rank}}
\newcommand{\xra}{\xrightarrow}
\newtheorem{theorem}{Theorem}[section]
\newtheorem{proposition}[theorem]{Proposition}
\newtheorem{lemma}[theorem]{Lemma}
\newtheorem{corollary}[theorem]{Corollary}
\newtheorem{question}[theorem]{Question}
\theoremstyle{definition}
\newtheorem*{definition}{Definition}
\theoremstyle{remark}
\newtheorem{remark}[theorem]{Remark}
\newtheorem{notation}[theorem]{Notation}
\newtheorem{chunk}[theorem]{}
\numberwithin{equation}{theorem}
\begin{document}

\begin{abstract}
In this work, we prove that if a graded, commutative algebra $R$ over a field
$k$ is not Koszul then, denoting by $\fm$ the maximal homogeneous
ideal of $R$ and by $M$ a finitely generated graded $R$-module, the nonzero modules of the form $\fm M$ have infinite
Castelnuovo-Mumford regularity. We also prove that over complete
intersections which are not Koszul, a nonzero direct summand of a
syzygy of $k$ has infinite regularity. Finally, we relate the vanishing
of the graded deviations of $R$ to having a nonzero direct summand
of a syzygy of $k$ of finite regularity.
\end{abstract}

\title{Modules of infinite regularity over commutative graded rings}
\author{Luigi Ferraro}
\maketitle
\setcounter{section}{1}
\section*{Introduction}
Let $k[x_1,\ldots,x_e]$ be a polynomial ring with coefficients in a field $k$, graded by setting
$\deg x_i=d_i\geq1$, and throughout the paper $R$ will denote the ring $k[x_1,\ldots,x_e]/I$ where $I$ is a
homogeneous ideal. We denote by $\fm$ the maximal homogeneous ideal
of $R$. The size of a minimal free resolution of a graded $R$-module
$M$ is measured by its graded \emph{Betti numbers}
$\beta_{i,j}(M)=\rank_k\ext_R^{i,j}(M,k)$. Invariants arising from
the Betti numbers of $M$ are its \emph{projective dimension} and its
\emph{Castelnuovo-Mumford regularity}. In \cite{ae}, Avramov and
Eisenbud prove that the regularity of $k$ is finite if and only if
the regularity of every finitely generated module is finite. The main goal of this paper is to construct finitely generated modules of infinite regularity, provided that the regularity of $k$ is also infinite. As a result these modules can be used as \enquote{test modules} to test if $k$, and therefore all finitely generated modules, have finite regularity.

In \cite{extremal}, Avramov proves that if $(Q,\fn)$ is a local commutative ring and $M$ a tightly embeddable $Q$-module (see the beginning of Section 1 for the definition of tightly embeddable modules in the graded setting) then $M$ has infinite projective dimension, provided that $Q$ is not regular. Avramov achieves this result by proving an inequality of formal power series involving the Poincar\'{e} series of the residue field of $Q$ and the Poincar\'{e} series of $M$. Examples of tightly embeddable modules are modules of the form $\fn L$ for some finitely generated $Q$-module $L$.
In Section 1 of this article, we prove a graded version of this result. In Theorem \ref{main1} we prove an inequality of formal power series involving the bigraded Poincar\'{e} series of $k$ and the bigraded Poincar\'{e} series of a tightly embeddable module. As a corollary of this theorem we prove that tightly embeddable modules have infinite regularity, provided that the regularity of $k$ is also infinite. This also gives a way to test if the regularity of $k$ is finite using tightly embeddable modules. In particular if a nonzero $R$-module of the form $\fm L$, with $L$ a graded and finitely generated $R$-module, has finite regularity then so does $k$.

In Section 2, we prove that if $R$ is a complete intersection then there is an inequality involving the bigraded Poincar\'{e} series of $k$ and the bigraded Poincar\'{e} series of nonzero direct summands of syzygies of $k$, and more generally of modules satisfying a technical hypothesis. As a consequence we deduce that if $R$ is a complete intersection and the regularity of $k$ is infinite, then any nonzero summand of a syzygy of $k$ has infinite regularity. We ask whether or not these results hold true if one removes the complete intersection hypothesis. In an attempt to remove the complete intersection hypothesis, we study, in the third section of this paper, the relation between the vanishing of the graded deviations of $R$, denoted by $\varepsilon_{i,j}(R)$, and the existence of a homomorphic image of a syzygy of $k$ with finite regularity.

We prove, under a technical hypothesis, that if a homomorphic image of a syzygy of $k$ has finite regularity, then the graded deviations of $R$, $\varepsilon_{i,j}(R)$, are $0$ if $i\gg0$ and $i\neq j$. We raise the question whether or not if $\varepsilon_{i,j}(R)=0$ for $i\gg0$ and $i\neq j$, then $\varepsilon_{i,j}(R)=0$ for $i\geq3$ and $i\neq j$. In an attempt to answer this question we show that if $d$ is an even integer larger or equal to 4 and $\varepsilon_{i,j}(R)=0$ for $i\geq d$ and $i\neq j$ then $\varepsilon_{i,j}(R)=0$ for $i\geq d-1$ and $i\neq j$.

\setcounter{section}{0}
\section{Tightly embeddable modules}
In this section we construct a class of modules, that have infinite regularity provided that $k$ has infinite regularity.
\begin{definition}
Let $M$ be a graded $R$-module. We say that $M$ is \emph{tightly embeddable} if there exists a finitely generated
graded $R$-module $L$ such that
\[
\fm M\varsubsetneqq \fm L\subseteq M\subseteq L.
\]
In this case, $M\subseteq L$ is a \emph{tight embedding}.
\end{definition}
\begin{chunk}
It follows from Nakayama's Lemma that if $L$ is finitely generated, then $\fm^iL\subseteq\fm^{i-1}L$ is a tight embedding for each $i\geq 1$ such that $\fm^iL$ is not zero.
\end{chunk}
Motivated by the results in \cite{extremal}, we explore the relation between tightly embeddable modules and regularity.

We establish the convention that if $V$ is a graded $k$-vector space then $V^j=V_{-j}$:

\begin{chunk}
The Hilbert series of a graded $k$-vector space $V$ is
\[
H_V(s)=\sum_j\rank_k V_js^j.
\]
If $V$ is a bigraded $k$-vector space such that for any fixed $i$ there are only finitely many non zero $V_{i,j}$, then the Hilbert series of $V$ is
\[
H_V(s,t)=\sum_{i,j}\rank_k V_{i,j}s^jt^i\in\mathbb{Z}[s^{\pm 1}][[t]].
\]
\end{chunk}

\begin{chunk}
The bigraded Poincar\'{e} series of a finitely generated graded $R$-module $M$ is
\[
P_M^R(s,t)=\sum_{i,j}\beta_{i,j}^R(M)s^jt^i\in\mathbb{Z}[s^{\pm 1}][[t]],
\]
where $i$ is the homological degree and $j$ is the internal degree.
\end{chunk}

\begin{chunk} Let $\displaystyle \sum_ia_it^i$ and $\displaystyle \sum_ib_it^i$ be
formal power series in $\mathbb{Z}[[t]]$. If $a_i\leq b_i$ for every
$i$, then we write
\[
\sum_ia_it^i\preceq\sum_ib_it^i
\]
or
\[
\sum_ib_it^i\succeq\sum_ia_it^i.
\]
\end{chunk}
\begin{chunk}
If $A$ is a graded $k$-algebra and $X$ is a graded $A$-module, then the $s$th \emph{shift} of $X$ is the graded $A$-module $(\Sigma^s X)^i=X^{i-s}$. Here, the $A$-action is defined as $a\cdot x=(-1)^{|a|s} ax$, where $|a|$ denotes the degree of $a$. We denote $\Sigma^1 X$ by $\Sigma X$.
\end{chunk}
\begin{remark}
In accordance with a bigraded version of Gulliksen and Levin \cite{Levin}, the
algebra $\ext_R(k,k)$ is a (bi-)graded Hopf $k$-algebra. This
algebra is the universal enveloping algebra of a (bi-)graded Lie
algebra $\pi^{*,\_}(R)$ where the first degree is homological and the
second is the internal degree. This follows from bigraded versions
of the theorems in \cite[(5.18)]{MilnorMoore} (characteristic 0),
\cite[Theorem 17]{andre} (positive odd characteristic) \cite[Theorem
2]{sjodin} (characteristic 2).
\end{remark}

\begin{notation}
We denote by $\me$ the algebra $\ext_R(k,k)$. For every $R$-module
$M$, we denote by $\me(M)$ the left $\me$-module $\ext_R(M,k)$. If $X$ is an $R$-module, then we denote by $X^\vee$ the module $\Hom_R(X,k)$, we point out that if $X$ is annihilated by $\fm$ then $\Hom_R(X,k)=\Hom_k(X,k)$. These notations will be used throughout the paper.
\end{notation}

The following theorem is a graded version of \cite[Lemma 6]{extremal} and has a similar proof.

\begin{theorem} \label{main1}
If $M\subseteq L$ is a tight embedding, then, with $V=\frac{\fm
L}{\fm M}$, there exists a coefficient-wise inequality
\[
H_{V}(s)P_k^R(s,t)\preceq P_M^R(s,t)\prod_{i=1}^e(1+s^{d_i}t).
\label{maineq1}
\]
\end{theorem}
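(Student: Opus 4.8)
The plan is to mimic the strategy of \cite[Lemma 6]{extremal}, transported into the bigraded setting, by comparing minimal free resolutions through the tight embedding $M \subseteq L$. First I would set up the short exact sequence of graded $R$-modules
\[
0 \lra M \lra L \lra L/M \lra 0,
\]
and observe that since $\fm(L/M) = \fm L/\fm M = V$ (using the defining inclusion $\fm L \subseteq M$ of a tight embedding), the module $L/M$ has $V$ as a submodule with $(L/M)/V$ annihilated by $\fm$, hence a $k$-vector space. The key numerical consequence I want to extract is a bound on the bigraded Bass/Betti behavior of $L/M$: applying $\ext_R(-,k)$ to the sequence $0 \to V \to L/M \to (L/M)/V \to 0$ and using that both $V$ and $(L/M)/V$ are $k$-vector spaces (so their $\me$-modules are free, i.e. sums of shifts of $\me = \ext_R(k,k)$), I get that $\me(L/M)$ sits in an exact sequence of $\me$-modules built from $\Sigma^{?}\me$'s, which translates into $H_V(s)\,P_k^R(s,t) \preceq P_{L/M}^R(s,t)$ after accounting for the internal-degree shifts recorded by $H_V(s)$.

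Next I would bring in the resolution of $L$. The long exact sequence in $\ext_R(-,k)$ coming from $0 \to M \to L \to L/M \to 0$ gives, for each bidegree, $\beta_{i,j}^R(L/M) \le \beta_{i,j}^R(M) + \beta_{i,j}^R(L)$, i.e. $P_{L/M}^R(s,t) \preceq P_M^R(s,t) + P_L^R(s,t)$. The remaining ingredient is to control $P_L^R(s,t)$ in terms of $P_M^R(s,t)$. Here is where the Koszul complex on the variables enters: since $L$ is finitely generated and graded, one compares $L$ with $M$ using that $M$ contains $\fm L$, so there is a surjection from a free module (on generators of $L$ in the appropriate degrees) fitting $L$ between $M$ and a module whose resolution is governed by the $d_i$. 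Concretely, the factor $\prod_{i=1}^e(1+s^{d_i}t)$ is the bigraded Poincar\'e series of the Koszul complex $\koszul{x_1,\dots,x_e}{R}$ on the variables (each $x_i$ contributing a factor $1 + s^{d_i}t$ since $\deg x_i = d_i$ and it sits in homological degree $1$), and the standard change-of-rings / Koszul-complex comparison yields $P_L^R(s,t) \preceq P_M^R(s,t)\prod_{i=1}^e(1+s^{d_i}t)$ — more carefully, one shows the stronger statement that $P_{L/M}^R(s,t) \preceq P_M^R(s,t)\prod_{i=1}^e(1+s^{d_i}t)$ directly, by resolving $L/M$ as a quotient related to $M$ via the Koszul syzygies. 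Chaining the two inequalities
\[
H_V(s)\,P_k^R(s,t) \preceq P_{L/M}^R(s,t) \preceq P_M^R(s,t)\prod_{i=1}^e(1+s^{d_i}t)
\]
gives the claim.

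The main obstacle I anticipate is the middle comparison: making the passage from $P_{L/M}^R$ (or $P_L^R$) to $P_M^R \prod_i (1+s^{d_i}t)$ precise and sign-correct in the bigraded setting. In Avramov's ungraded argument this is where the Hopf-algebra structure on $\me$ and the freeness of $\me(k\text{-vector space})$ over $\me$ is used to split off the relevant summands; the bigraded version requires tracking internal degrees carefully through the shifts $\Sigma^s$ (with the sign rule on the $\me$-action recorded earlier), and ensuring the Koszul factor $\prod_{i=1}^e (1+s^{d_i}t)$ lands with the internal degrees $d_i$ in the $s$-exponent and homological degree $1$ in the $t$-exponent rather than, say, $\prod (1 + st^{d_i})$. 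I would handle this by working with $\me(-)$ as a bigraded left $\me$-module throughout, using that a bigraded $k$-vector space $W$ has $\me(W) \cong \bigoplus \Sigma^{?}\me$ with shifts read off from $H_W(s)$, and that the induced maps in the long exact sequence of $\ext$ are $\me$-linear, so coefficient-wise inequalities of Hilbert series follow from the sub/quotient structure over $\me$. The Koszul-complex bound on $P_L^R$ relative to $P_M^R$ is then the bigraded analogue of the standard fact that tensoring a minimal resolution of $M$ with $\koszul{\bsx}{R}$ surjects onto a (non-minimal) resolution of $L$.
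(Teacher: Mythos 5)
There is a genuine gap, and it begins at the very first step: the identification $\fm(L/M)=\fm L/\fm M$ is incorrect. Since a tight embedding has $\fm L\subseteq M$, one gets $\fm(L/M)=(\fm L+M)/M=0$, so $N=L/M$ is already a $k$-vector space and $V=\fm L/\fm M$ is \emph{not} a submodule of it. (The module that does contain $V$, with quotient $\ov L=L/\fm L$, is $W=L/\fm M$; this is the object the paper actually works with.) As a consequence your first inequality $H_V(s)P_k^R(s,t)\preceq P_{L/M}^R(s,t)$ is false: take $L=R$ and $M=\fm$, a tight embedding whenever $\fm\neq\fm^2$. Then $L/M=k$, so $P_{L/M}^R=P_k^R$, while $H_V(s)=H_{\fm/\fm^2}(s)$ has a positive coefficient in some degree $d_i\geq1$; comparing coefficients of $s^{d_i}t^0$ on the two sides forces $1\leq0$. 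Your fallback claim $P_{L/M}^R\preceq P_M^R\prod_i(1+s^{d_i}t)$ fails on the same example (compare the coefficients of $s^0t^0$), so the proposed chain through $P_{L/M}^R$ cannot be repaired: both links are false even though their composite, the theorem, is true.

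The actual argument is structurally different. One maps the exact sequence $0\to\ov M\to W\to N\to0$ (with $\ov M=M/\fm M$) to $0\to M\to L\to N\to0$ and studies the connecting homomorphism $\delta\colon\me(\ov M)\to\Sigma\me(N)$, which factors through $\me(M)$ by commutativity of the resulting diagram of $\me$-modules; this is how a lower bound on $P_M^R$, rather than on $P_{L/M}^R$, is produced. Writing $\me\cong\mk\otimes_k\bigwedge\me^1$ by Poincar\'e--Birkhoff--Witt, where $\mk$ is the enveloping algebra of $\pi^{\geq2,*}(R)$, one checks that the restriction of $\im\delta$ to $\mk\otimes_k\ov M^\vee$ is the free $\mk$-module $\mk\otimes_k\im\delta^0$, and computes $H_{\im\delta^0}(s)=H_V(s^{-1})$ from the four-term exact sequence $0\to N^\vee\to W^\vee\to\ov M^\vee\to\im\delta^0\to0$. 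The factor $\prod_{i=1}^e(1+s^{d_i}t)$ then appears as $H_{\bigwedge\me^1}$ in the factorization $H_\me=H_\mk\cdot H_{\bigwedge\me^1}$ --- not from tensoring a resolution of $M$ with the Koszul complex, a comparison which, as the example above shows, does not hold coefficient-wise in the direction you need.
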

\begin{proof}
We set the following notation
\[
\ov M=M/\fm M,\quad \ov L=L/\fm L,
\]
\[
N=L/M,\quad W=L/\fm M.
\]
Consider the following commutative diagram
\begin{equation} \label{DiagCoho}
\xymatrixcolsep{2pc}
\xymatrixrowsep{2pc}
\xymatrix{
0 \ar@{->}[r] & M \ar@{->}[r] \ar@{->}[d] & L \ar@{->}[r] \ar@{->}[d] & N \ar@{->}[r] \ar@{=}[d] &0\\
0 \ar@{->}[r] & \ov{M} \ar@{->}[r]  & W \ar@{->}[r]  & N \ar@{->}[r] &0
}
\end{equation}
This induces a commutative diagram of homomorphisms of bigraded
left $\me$-modules.
\begin{equation} \label{diagrcoho}
\xymatrixcolsep{2pc}
\xymatrixrowsep{2pc}
\xymatrix{
\me(L) \ar@{->}[r] & \me(M) \ar@{->}[r]^{\eth}  & \Sigma\me(N)  \\
\me(W) \ar@{->}[r] \ar@{->}[u]  & \me(\ov{M})  \ar@{->}[u] \ar@{->}[r]^{\delta}  & \Sigma\me(N), \ar@{=}[u]
}
\end{equation}
where $\eth$ and $\delta$ are connecting homomorphisms.

As $\fm$ annihilates $\ov M$ and $N$, there are natural isomorphisms $\me(\ov M)\cong\me\otimes_k\ov M^\vee$ and $\me(N)\cong\me\otimes_k N^\vee$.
Let $\mk$ denote the universal enveloping algebra of $\pi^{\geq2,*}(R)$.

Consider the following commutative diagram
\[
\xymatrixcolsep{3pc}
\xymatrixrowsep{2pc}
\xymatrix{
\me(\ov M) \ar@{->}[r]^{\delta} &  \Sigma\me(N)  \\
\ov M^\vee \ar@{->}[r]^{\delta^0} \ar@{->}[u]  & \me^1\otimes_kN^\vee \ar@{->}[u]
}
\]
where the vertical maps are natural injective maps. Fix the $k$-basis
\[
\mu_1,\ldots,\mu_m\;\;\mathrm{of}\;\;\ov M^\vee,\quad
\nu_1,\ldots,\nu_q\;\;\mathrm{of}\;\;N^\vee,\quad
\xi_1,\ldots,\xi_e\;\;\mathrm{of}\;\;\me^1.
\]
The map $\delta$ is $\me$-linear and so is also $\mk$-linear. Hence, if $\varepsilon\in\me$
\[
\delta(\varepsilon\otimes\mu_h)=\varepsilon\delta^0(\mu_h)=\Sigma_{i,j}a_{i,j,h}\varepsilon\xi_j\otimes
\nu_i
\]
where
\[
\delta^0(\mu_h)=\Sigma_{i,j}a_{i,j,h}\xi_j\otimes
\nu_i\quad\mathrm{with}\;a_{i,j,h}\in k.
\]
By the Poincar\'{e}-Birkhoff-Witt theorem, $\me$ is a free $\mk$-module with basis
\begin{equation} \label{pbw.infreg}
\{\xi_{j_1}\cdots\xi_{j_k}\}_{j_1<\cdots<j_k}.
\end{equation}
Hence, $\me(N)$ is a free $\mk$-module with basis
\begin{equation} \label{pbw.N}
\{\xi_{j_1}\cdots\xi_{j_k}\otimes\nu_i\}_{j_1<\cdots<j_k,i}.
\end{equation}
If $\kappa\in\mk$, then
\[
\delta(\kappa\otimes\mu_h)=\Sigma_{i,j}a_{i,j,h}\kappa\xi_j\otimes
\nu_i.
\]
This means that as a $\mk$-module $\im\delta |_{\mk\otimes_k\ov
M^\vee}$ is generated by the elements $\delta^0(\mu_h)$.

We can change the basis of $\ov M^\vee$ such that the coordinate vectors of
\[
\delta^0(\mu_1),\ldots,\delta^0(\mu_{m'}),
\]
with respect to the basis \eqref{pbw.N}, are linearly independent over $k$ and
\[
\delta^0(\mu_{m'+1}),\ldots,\delta^0(\mu_m)
\]
are all zero. Because the elements $\xi_j\otimes\nu_i$ are part of a $\mk$-basis of $\me(N)$ we deduce that $\delta^0(\mu_1),\ldots,\delta^0(\mu_{m'})$ are linearly independent over $\mk$.

As a result
\[
\im\delta |_{\mk\otimes_k\ov M^\vee}=\mk\otimes_k\im\delta^0.
\]

This means that $\me(N)$ contains a copy of
$\mk\otimes_k\im\delta^0$, and based on the commutativity of the diagram
\eqref{diagrcoho}, this gives

\[
H_{\me(M)}(s,t)\succeq H_{\mk\otimes_k\im\delta^0}(s,t)=H_{\mk}(s,t)H_{\im\delta^0}(s).
\]
According to \eqref{pbw.infreg}, there is an equality of formal power series
\[
H_{\mk}(s,t)=\frac{H_{\me}(s,t)}{H_{\bigwedge \me^1}(s,t)}.
\]
Because $\me^1\cong(\fm/\fm^2)^\vee$ and $H_{X^\vee}(s)=H_X(s^{-1})$ for every $R$-module $X$ annihilated by $\fm$, we deduce
\[
H_{\bigwedge \me^1}(s,t)=\prod_{i=1}^e(1+s^{-d_i}t).
\]
Now, consider the following chain of equalities of Hilbert series
\begin{align*}
H_{\im\delta^0}(s)&=H_{\ov M^\vee}(s)-H_{W^\vee}(s)+H_{N^\vee}(s)\\
&=H_{\ov M}(s^{-1})-H_{W^\vee}(s)+H_N(s^{-1})\\
&=H_{W}(s^{-1})-H_{W^\vee}(s)\\
&=H_{V}(s^{-1})+H_{\ov L}(s^{-1})-H_{W^\vee}(s)\\
&=H_{V}(s^{-1}).
\end{align*}
The first equality follows from the exact sequence in cohomology induced by the second row in \eqref{DiagCoho}, i.e.,
\[
0\rightarrow N^\vee\rightarrow W^\vee\rightarrow \ov{M}^\vee\rightarrow \im\delta^0\rightarrow0,
\]
the second equality follows from the already mentioned fact that $H_{X^\vee}(s)=H_X(s^{-1})$ for every graded $R$-module $X$ annihilated by $\fm$.
\noindent
The third equality follows from the second row in \eqref{DiagCoho}, the fourth equality follows from
\[
0\rightarrow V\rightarrow W\rightarrow \ov L\rightarrow0,
\]
and the last equality follows from $H_{W^\vee}(s)=H_{\ov L}(s^{-1})$, indeed
\begin{align*}
\Hom_R(W,k)&\cong\Hom_R(W,\Hom_k(k,k))\\
&\cong\Hom_k(\ov L,k),
\end{align*}
where the second isomorphism comes from adjunction, therefore $H_{W^\vee}(s)=H_{\ov{L}^\vee}(s)=H_{\ov L}(s^{-1})$.

Holistically we obtain
\begin{align*}
H_{\me(M)}(s,t)&\succeq H_{\mk\otimes_k\im\delta^0}(s,t)\\
&=H_{\mk}(s,t)H_{\im\delta^0}(s,t)\\
&=\frac{H_{\me}(s,t)}{\prod_{i=1}^e(1+s^{-d_i}t)}H_{V}(s^{-1}).
\end{align*}
Now, we observe that
\[
H_{\me(M)}(s,t)=P^R_M(s^{-1},t)\quad\mathrm{and}\quad H_{\me}(s,t)=P^R_k(s^{-1},t).
\]

Hence, we obtain
\[
P^R_M(s^{-1},t)\succeq\frac{P^R_k(s^{-1},t)}{\prod_{i=1}^e(1+s^{-d_i}t)}H_{V}(s^{-1}),
\]
and replacing $s^{-1}$ with $s$, we obtain the desired inequality.
\end{proof}

Let $M$ be a finitely generated graded $R$-module. The \emph{Castelnuovo-Mumford regularity} of $M$ is
\[
\reg M=\sup\{j-i\mid\beta_{i,j}(M)\neq0\}.
\]

\begin{definition}
A ring $R$ is \emph{Koszul} if the algebra $\ext_R(k,k)$ is generated by $\ext_R^{1,\_}(k,k)$.
\end{definition}

According to \cite{AP}, if $\reg k<\infty$, then $\reg k=\Sigma_i (d_i-1)$; this result can happen if and only if $R$ is Koszul.

\begin{lemma}\label{SeriesProd1}
Let
\[
P(s,t)=\sum_{i,j}a_{i,j}s^jt^i\in\mathbb{Z}[s^{\pm1}][[t]],\quad Q(s,t)=\sum_{i,j}b_{i,j}s^jt^i\in\mathbb{Z}[s^{\pm1}][[t]],
\]
with $a_{i,j},b_{i,j}\geq0$ for all $i,j$. Let
\[
P(s,t)Q(s,t)=\sum_{i,j}c_{i,j}s^jt^i.
\]
\begin{enumerate}
\item If $a_{i,j}=0$ for $j-i>r_1$ and $b_{i,j}=0$ for $j-i>r_2$, then $c_{i,j}=0$ for $j-i>r_1+r_2$.
\item If $a_{i,j}=0$ for $j-i>r_1$, $a_{0,r_1}\neq 0$ and $c_{i,j}=0$ for $j-i>r_2$, then $b_{i,j}=0$ for $j-i>r_2-r_1$.
\end{enumerate}
\end{lemma}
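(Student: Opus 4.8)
The plan is to prove both parts by a direct coefficient-chasing argument, tracking the "diagonal" $j - i$ along which coefficients are allowed to be nonzero. Writing out the Cauchy product, we have
\[
c_{i,j} = \sum_{\substack{p+u = i \\ q+v = j}} a_{p,q}\, b_{u,v},
\]
where every summand is nonnegative since $a_{p,q}, b_{u,v} \geq 0$. This nonnegativity is the crucial feature: it means no cancellation can occur, so $c_{i,j} = 0$ forces each individual product $a_{p,q} b_{u,v}$ in the sum to vanish, and conversely $c_{i,j} \neq 0$ forces at least one summand to be nonzero.

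For part (1), fix $i, j$ with $j - i > r_1 + r_2$ and suppose $c_{i,j} \neq 0$. Then there is a decomposition $p + u = i$, $q + v = j$ with $a_{p,q} b_{u,v} \neq 0$, hence $a_{p,q} \neq 0$ and $b_{u,v} \neq 0$. By hypothesis $q - p \leq r_1$ and $v - u \leq r_2$, so adding gives $j - i = (q - p) + (v - u) \leq r_1 + r_2$, contradicting our assumption. Therefore $c_{i,j} = 0$.

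For part (2), fix $i, j$ with $j - i > r_2 - r_1$ and suppose $b_{i,j} \neq 0$. Consider the term of $P(s,t)Q(s,t)$ coming from multiplying $a_{0, r_1} s^{r_1}$ against $b_{i,j} s^j t^i$; this contributes $a_{0,r_1} b_{i,j}$ to the coefficient $c_{i, j + r_1}$. The point is that this contribution cannot be cancelled by any other term in the Cauchy product for $c_{i, j+r_1}$: any other decomposition $p + u = i$, $q + v = j + r_1$ with $a_{p,q} b_{u,v} \neq 0$ contributes a nonnegative amount, so in fact $c_{i, j+r_1} \geq a_{0,r_1} b_{i,j} > 0$. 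On the other hand, the diagonal of this coefficient is $(j + r_1) - i > (r_2 - r_1) + r_1 = r_2$, so by hypothesis on the $c_{i,j}$ we must have $c_{i, j+r_1} = 0$, a contradiction. Hence $b_{i,j} = 0$.

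The only mild subtlety—and the place one must be slightly careful—is confirming that the Cauchy product is well-defined as an element of $\mathbb{Z}[s^{\pm 1}][[t]]$, i.e.\ that for each fixed power $t^i$ only finitely many monomials $s^j$ appear, so that "$c_{i,j}$" makes sense and the sum defining it is finite. This follows because for each fixed $i$, the terms $a_{p,q}$ with $p \leq i$ span only finitely many internal degrees $q$ (as $P \in \mathbb{Z}[s^{\pm 1}][[t]]$), and likewise for $b$; thus the double sum is finite. Once this is in place, both parts are immediate from the nonnegativity of the coefficients, which rules out all cancellation. No genuine obstacle arises; the argument is entirely formal.
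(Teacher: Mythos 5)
Your proof is correct and follows essentially the same route as the paper: both parts are proved by examining the Cauchy product coefficient $c_{i,j}$ (respectively $c_{i,j+r_1}$) and using the constraint on the diagonals $q-p$ and $v-u$, with nonnegativity of the coefficients ruling out cancellation in part (2). The only difference is cosmetic --- you phrase part (1) contrapositively and add a remark on well-definedness of the product --- so no further comment is needed.
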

\begin{proof}

\begin{enumerate}
\item By the definition of product in the ring $\mathbb{Z}[s^{\pm1}][[t]]$ the coefficients $c_{i,j}$ are given by the formula
\[
c_{i,j}=\sum_{p,q}a_{p,q}b_{i-p,j-q}.
\]
If $q-p>r_1$ then $a_{p,q}b_{i-p,j-q}=0$. If $q-p\leq r_1$ (and $j-i>r_1+r_2$) then $j-q-(i-p)=j-i-(q-p)\geq j-i-r_1>r_1+r_2-r_1=r_2$ and therefore $a_{p,q}b_{i-p,j-q}=0$. This shows that if $j-i>r_1+r_2$ then $c_{i,j}=0$.

\item As before the following equality holds for all $i,j$
\[
c_{i,j+r_1}=\sum_{p,q}a_{p,q}b_{i-p,j+r_1-q}.
\]
If $j-i>r_2-r_1$ then $c_{i,j+r_1}=0$, therefore $a_{p,q}b_{i-p,j+r_1-q}=0$ for all $p,q$. In particular $a_{0,r_1}b_{i,j}=0$ and, since $a_{0,r_1}\neq0$, we deduce that $b_{i,j}=0$.

\end{enumerate}
\end{proof}

\begin{definition}
If $V$ is a finite dimensional graded $k$-vector space then $\mathrm{end}\;V$ is the largest $j$ such that $V_j\neq0$.
\end{definition}

\begin{corollary}
If $M$ is a tightly embeddable module and $\reg M<\infty$, then $R$ is Koszul.
\end{corollary}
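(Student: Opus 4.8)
The plan is to read this off from Theorem \ref{main1} together with the two parts of Lemma \ref{SeriesProd1}, and then invoke the result of \cite{AP} recalled above. Fix a finitely generated graded $R$-module $L$ realizing the tight embedding, so that $\fm M\varsubsetneqq\fm L\subseteq M\subseteq L$, and set $V=\fm L/\fm M$ and $r=\reg M<\infty$. First I would record two elementary facts about $V$: it is nonzero, because the embedding is tight; and it is annihilated by $\fm$, since $\fm\cdot\fm L=\fm^2L\subseteq\fm M$ (using $\fm L\subseteq M$). Hence $V$ is a nonzero finite-dimensional graded $k$-vector space, so $H_V(s)=\sum_j(\rank_k V_j)s^j$ is a nonzero Laurent polynomial in $s$ with nonnegative coefficients whose top degree is $d:=\mathrm{end}\,V$, and $\rank_k V_d\neq 0$.

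Next I would track internal-minus-homological degrees in the inequality supplied by Theorem \ref{main1},
\[
H_{V}(s)P_k^R(s,t)\preceq P_M^R(s,t)\prod_{i=1}^e(1+s^{d_i}t).
\]
In $P_M^R(s,t)=\sum_{i,j}\beta_{i,j}^R(M)s^jt^i$ the coefficient of $s^jt^i$ vanishes for $j-i>r$, by the definition of regularity. The polynomial $\prod_{i=1}^e(1+s^{d_i}t)$ is a sum of monomials $s^{a}t^{b}$ with $a=\sum_{i\in S}d_i$ and $b=|S|$ over subsets $S\subseteq\{1,\dots,e\}$, and for each of these $a-b=\sum_{i\in S}(d_i-1)\leq\sum_{i=1}^e(d_i-1)$; so its $s^jt^i$-coefficient vanishes for $j-i>\sum_{i=1}^e(d_i-1)$. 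By Lemma \ref{SeriesProd1}(1), the product on the right-hand side has $s^jt^i$-coefficient zero whenever $j-i>r+\sum_{i=1}^e(d_i-1)$. Since every series involved has nonnegative coefficients, the coefficient-wise inequality forces $H_V(s)P_k^R(s,t)$ to have $s^jt^i$-coefficient zero in the same range of $(i,j)$.

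Finally I would apply Lemma \ref{SeriesProd1}(2) with $P(s,t)=H_V(s)$, regarded as a bigraded series concentrated in homological degree $0$ — so the hypothesis there holds with $r_1=d$ and the required nonzero coefficient is $a_{0,d}=\rank_k V_d\neq 0$ — and with $Q(s,t)=P_k^R(s,t)$, whose product with $H_V(s)$ we have just shown has coefficients vanishing for $j-i>r+\sum_{i=1}^e(d_i-1)$. The lemma then gives $\beta_{i,j}^R(k)=0$ for $j-i>r+\sum_{i=1}^e(d_i-1)-d$, that is, $\reg k\leq\reg M+\sum_{i=1}^e(d_i-1)-\mathrm{end}\,V<\infty$. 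By the result of \cite{AP}, finiteness of $\reg k$ implies that $R$ is Koszul, which is what we wanted. I do not expect a genuine obstacle: the corollary is bookkeeping on top of Theorem \ref{main1} and Lemma \ref{SeriesProd1}. The only points needing care are verifying that $V$ is $\fm$-annihilated and finite-dimensional (so that $H_V(s)$ is an honest Laurent polynomial with a well-defined top term, making Lemma \ref{SeriesProd1}(2) applicable) and keeping the shift by $-\mathrm{end}\,V$ correct.
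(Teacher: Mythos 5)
Your proposal is correct and follows essentially the same route as the paper: apply Lemma \ref{SeriesProd1}(1) to bound the right-hand side of Theorem \ref{main1}, then Lemma \ref{SeriesProd1}(2) with $P=H_V(s)$ and $a_{0,\mathrm{end}\,V}\neq0$ to conclude $\reg k<\infty$, and finish with \cite{AP}. Your version only differs in minor bookkeeping (the sharper constant $\sum_i(d_i-1)$ in place of the paper's $\sum_i d_i$, which is immaterial for finiteness) and in explicitly checking that $V$ is a nonzero finite-dimensional $\fm$-annihilated vector space, a point the paper leaves implicit.
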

\begin{proof}
Set $r:=\reg M$. If $\reg M<\infty$, then $\beta_{i,j}^R(M)=0$ for $j-i>r$. If
\[
\sum_{i,j}a_{i,j}s^jt^i=P_M^R(s,t)\prod_{i=1}^e(1+s^{d_i}t),
\]
then, by Lemma \ref{SeriesProd1}(1), $a_{i,j}=0$ for $j-i>r+\sum_{i=1}^ed_i$. Set a tight embedding $M\subseteq L$ and denote $\fm L/\fm M$ by $V$, by Theorem \ref{maineq1} if
\[\sum_{i,j}c_{i,j}s^jt^i=H_{V}(s)P_k^R(s,t),
\]
then $c_{i,j}=0$ if $j-i>r+\sum_{i=1}^ed_i$. Because $V\neq0$ we deduce, using Lemma \ref{SeriesProd1}(2), that $\beta_{i,j}^R(k)=0$ if $j-i>r-\mathrm{end}\;V+\sum_{i=1}^ed_i$, which is equivalent to $\reg k<\infty$.
\end{proof}

\section{Direct summands of syzygies of the residue field}
In this section, we prove that nonzero direct summands of syzygies of $k$ have infinite regularity if $R$ is a complete intersection that is not Koszul. Special homological properties of this class of modules were already noticed in \cite{extremal,Mart,ryo}.
\begin{remark}
We point out that if $R$ is a complete intersection then $R$ is Koszul if and only if $I$ is generated by quadrics.
\end{remark}
\begin{theorem}\label{main2}
If $R$ is a complete intersection, $M=\Omega^mk$ with $m\geq1$, $\beta\colon M\rightarrow N$ is an $R$-module homomorphism of finitely generated graded $R$-modules such that for some $n$, the map induced by $\beta$, $\beta^n\colon\ext_R^n(N,k)\rightarrow\ext_R^n(M,k)$ is not zero, then for some $b\in\mathbb{Z}$, there is a coefficient-wise inequality
\[
s^bP_k^R(s,t)\preceq P^R_{\Omega^n N}(s,t)\prod_{i=1}^e(1+s^{d_i}t).
\]
\end{theorem}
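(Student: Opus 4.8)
The plan is to run the machine from the proof of Theorem~\ref{main1}, now feeding in the complete intersection hypothesis in order to control the subalgebra $\mk\subseteq\me$. From that proof I will use the following, all valid verbatim here: for every finitely generated graded $R$-module $X$ one has $P^R_X(s,t)=H_{\me(X)}(s^{-1},t)$; one has $H_{\bigwedge\me^1}(s,t)=\prod_{i=1}^e(1+s^{-d_i}t)$, so that $\prod_{i=1}^e(1+s^{d_i}t)=H_{\bigwedge\me^1}(s^{-1},t)$; and, with $\mk=U(\pi^{\ge2,*}(R))$, the algebra $\me$ is a free $\mk$-module by the Poincar\'e--Birkhoff--Witt theorem, whence $H_{\me}(s,t)=H_{\bigwedge\me^1}(s,t)\,H_{\mk}(s,t)$. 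Because $R$ is a complete intersection it is classical that $\pi^{i,*}(R)=0$ for $i\ge3$ and that $\pi^{2,*}(R)$ is an abelian Lie subalgebra, central in $\me$; hence $\mk$ is a commutative polynomial ring over $k$, in particular an integral domain, and so $\me$---being $\mk$-free---is a torsion-free $\mk$-module. I will also use the standard identification $\me(M)=\me(\Omega^m k)\cong\me_{\ge m}$ (up to a shift of the homological grading by $m$), where $\me_{\ge m}:=\bigoplus_{i\ge m}\me^i$; one gets it by truncating the minimal free resolution of $k$ and noting that the connecting maps of $\ext_R(-,k)$ are $\me$-linear. Since $\me_{\ge m}$ is a $\mk$-submodule of the $\mk$-free module $\me$, it---hence also $\me(M)$---is torsion-free over $\mk$.

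The heart of the argument is to convert the hypothesis on $\beta^n$ into a free $\mk$-submodule of $\me(M)$. Pick a homogeneous $\zeta\in\ext^n_R(N,k)$ with $\gamma:=\beta^n(\zeta)\ne0$ in $\ext^n_R(M,k)$; this is possible because $\beta^n$ is a nonzero graded map. Let $\varphi\colon\me(N)\to\me(M)$ be the induced map, with components $\beta^i$; it is left-$\me$-linear and preserves homological degree, so $J:=\im\varphi$ is a graded $\me$-submodule of $\me(M)$ containing $\gamma$, and therefore containing the $\mk$-submodule $\mk\gamma$. Since $\me(M)$ is torsion-free over the domain $\mk$ and $\gamma\ne0$, the map $\kappa\mapsto\kappa\gamma$ is an isomorphism $\mk\xrightarrow{\sim}\mk\gamma$ raising homological degree by $n$ and shifting internal degree by a fixed amount; hence $H_{\mk\gamma}(s,t)=s^{b_0}t^{\,n}H_{\mk}(s,t)$ for a suitable $b_0\in\mathbb{Z}$ determined by the internal degree of $\gamma$. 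As $\mk$ and $\me$ are non-negatively graded in homological degree with $\mk^0=\me^0=k$, the submodule $\mk\gamma$ lies in homological degrees $\ge n$, so $\mk\gamma\subseteq J_{\ge n}$; restricting the surjection $\varphi\colon\me(N)\twoheadrightarrow J$ to homological degrees $\ge n$ gives a surjection $\me(N)_{\ge n}\twoheadrightarrow J_{\ge n}$, and therefore
\[
H_{\me(N)_{\ge n}}(s,t)\ \succeq\ H_{J_{\ge n}}(s,t)\ \succeq\ H_{\mk\gamma}(s,t)=s^{b_0}t^{\,n}H_{\mk}(s,t).
\]
On the other hand a minimal free resolution of $\Omega^n N$ is obtained by truncating one of $N$, so $\beta^R_{i,j}(\Omega^n N)=\beta^R_{i+n,j}(N)$ for every $i\ge0$, which is to say $H_{\me(N)_{\ge n}}(s,t)=t^{\,n}H_{\me(\Omega^n N)}(s,t)=t^{\,n}P^R_{\Omega^n N}(s^{-1},t)$. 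Cancelling $t^{\,n}$ yields $P^R_{\Omega^n N}(s^{-1},t)\succeq s^{b_0}H_{\mk}(s,t)$.

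To conclude I would multiply both sides by $H_{\bigwedge\me^1}(s,t)=\prod_{i=1}^e(1+s^{-d_i}t)$, a power series with non-negative coefficients, which preserves $\succeq$; using $H_{\bigwedge\me^1}(s,t)H_{\mk}(s,t)=H_{\me}(s,t)=P^R_k(s^{-1},t)$ this gives
\[
P^R_{\Omega^n N}(s^{-1},t)\prod_{i=1}^e(1+s^{-d_i}t)\ \succeq\ s^{b_0}P^R_k(s^{-1},t).
\]
Substituting $s^{-1}$ for $s$, which is a bijection on the monomials $s^jt^i$ and hence compatible with $\succeq$, turns this into $P^R_{\Omega^n N}(s,t)\prod_{i=1}^e(1+s^{d_i}t)\succeq s^{-b_0}P^R_k(s,t)$, i.e. the asserted inequality with $b=-b_0$.

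Since this is an adaptation of the proof of Theorem~\ref{main1}, the genuinely new points are two. First, the complete intersection hypothesis enters only to make $\mk$ an integral domain; this is precisely what forces $\mk\gamma$ to be isomorphic to all of $\mk$ rather than to a proper quotient, and it is the obstruction to dropping the hypothesis (exactly the difficulty raised in the introduction). Second, one needs $\me(M)$---not $\me(N)$, which may well carry $\mk$-torsion---to be $\mk$-torsion-free, and this is what the identification $\me(\Omega^m k)\cong\me_{\ge m}$ delivers. I expect the only real bookkeeping nuisance to be tracking the internal-degree shift by $\gamma$ through the two substitutions $s\leftrightarrow s^{-1}$; the rest is formal.
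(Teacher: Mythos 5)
Your proof is correct and follows essentially the same route as the paper's: the complete intersection hypothesis makes $\mk$ a polynomial ring, torsion-freeness of $\me^{\geq m,\_}$ over $\mk$ turns the nonzero element $\gamma=\beta^n(\zeta)$ into a free (shifted) copy of $\mk$ inside the image of the induced map on Ext, and the Hilbert-series bookkeeping is identical. The only cosmetic difference is that the paper passes to $\Omega^nN$ and $\Omega^{m+n}k$ at the outset and runs $(\ov{N}')^\vee$ through an iterated connecting isomorphism onto $\Sigma^{m+n}\me^{\geq m+n,\_}$, whereas you work with $\me(N)\to\me(M)$ directly and truncate at homological degree $n$ at the end; both yield the same inequality.
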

\begin{proof}
We set the following notation
\[
N^\prime=\Omega^nN, \quad \ov N^\prime=N^\prime/\fm N^\prime, \quad M^\prime=\Omega^{m+n}k.
\]
Let $\pi$ be the canonical projection $N^\prime\rightarrow N^\prime/\fm N^\prime$ and $\beta^\prime$ be a morphism $M^\prime\rightarrow N^\prime$ obtained by extending $\beta$ to a morphism of free resolutions.

Let $\xi$ denote the composed map
\[
\xi^*:\me\otimes_k(\ov N^\prime)^\vee\cong\me(\ov
N^\prime)\xra{\pi^*}\me(N^\prime)\xra{\beta^{\prime
*}}\me(M^\prime)\xra{\alpha^*}\Sigma^{m+n}\me^{\geq m+n,\_},
\]
where $\alpha^*$ is an iterated connecting homomorphism, and hence is an isomorphism. By construction, $\xi^0\neq0$ since $\pi^0,\alpha^0\neq0$ and $(\beta')^0=\beta^n\neq0$.

As a result of the Poincar\'{e}-Birkhoff-Witt Theorem, $\me\cong\mk\otimes_k\bigwedge\me^1$ as (bi-)graded left $\mk$-modules, if $R$ is a complete intersection, then $\pi^{i,\_}(R)=0$ if $i\geq3$ (see for example, \cite[Theorem 7.3.3 and Theorem 10.1.2]{infinite} and the references given there). Because $\pi^{i,\_}(R)=0$ if $i\geq3$, $\mk$ is a polynomial ring. Hence, $\im\xi^*$ is a torsion-free $\mk$-module (because $\me^{\geq m+n,\_}$ is a submodule of a free module over the polynomial ring $\mk$). As a result, any nonzero element in the image of $\xi^0$ generates a copy of $\mk$ in $\im\xi^*$, whose internal degree might be shifted by some $b\in\mathbb{Z}$. It follows that
\[
H_{\me(N^\prime)}(s,t)\succeq s^b
H_{\mk}(s,t)=s^b\frac{H_{\me}(s,t)}{\prod_{i=1}^e(1+s^{-d_i}t)}.
\]
Now, we conclude as in Theorem \ref{main1}.
\end{proof}

\begin{corollary}
Let $R$ be a complete intersection with $\reg k=\infty$. For any $n\geq0$, each nonzero direct summand of $\Omega^nk$ has infinite regularity.
\end{corollary}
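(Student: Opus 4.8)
The plan is to reduce everything to Theorem~\ref{main2}, invoked in cohomological degree zero. First dispose of the case $n=0$: since $\Omega^0k=k$ is a simple $R$-module it is indecomposable, so its only nonzero direct summand is $k$ itself, which has infinite regularity by hypothesis. So assume $n\geq1$, and let $D$ be a nonzero graded direct summand of $\Omega^nk$, say $\Omega^nk=D\oplus D'$. Let $\beta\colon\Omega^nk\to D$ be the projection; it is a split surjection, so the induced map $\beta^0\colon\Hom_R(D,k)\to\Hom_R(\Omega^nk,k)$ is split injective. Since $D$ is finitely generated and nonzero, Nakayama's lemma gives $D/\fm D\neq0$, whence $\Hom_R(D,k)=\Hom_k(D/\fm D,k)\neq0$, and therefore $\beta^0\neq0$.

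Next apply Theorem~\ref{main2} taking, in the notation there, the source $M=\Omega^nk$ (so its parameter $m$ equals our $n$, which is $\geq1$ as required), the target $N=D$, the homomorphism $\beta$ above, and the cohomological-degree parameter equal to $0$. The nonvanishing hypothesis on the map induced in degree $0$ is exactly $\beta^0\neq0$, established above, and $\Omega^0N=D$; so the theorem produces an integer $b$ with
\[
s^bP_k^R(s,t)\preceq P_D^R(s,t)\prod_{i=1}^e(1+s^{d_i}t).
\]

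Finally, suppose for contradiction that $\reg D=r<\infty$, so $\beta_{i,j}^R(D)=0$ for $j-i>r$. The coefficient of $s^jt^i$ in $\prod_{i=1}^e(1+s^{d_i}t)$ vanishes for $j-i>\sum_{i=1}^ed_i$, so by Lemma~\ref{SeriesProd1}(1) the coefficient of $s^jt^i$ in $P_D^R(s,t)\prod_{i=1}^e(1+s^{d_i}t)$ vanishes for $j-i>r+\sum_{i=1}^ed_i$. Comparing coefficients in the displayed inequality then forces $\beta_{i,j}^R(k)=0$ whenever $j-i>r+\sum_{i=1}^ed_i-b$, that is $\reg k\leq r+\sum_{i=1}^ed_i-b<\infty$, contradicting the hypothesis $\reg k=\infty$. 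Hence $\reg D=\infty$. The one thing to check with care is that Theorem~\ref{main2} really does apply in cohomological degree $0$ — this is what lets us use the automatically nonzero degree-$0$ map of a split surjection, and it is also why the case $n=0$ must be handled separately, since that theorem requires $m\geq1$; past that point the argument is the routine coefficient comparison already used in Section~1.
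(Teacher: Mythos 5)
Your proof is correct and is exactly the deduction the paper leaves implicit: apply Theorem~\ref{main2} to the split projection $\Omega^nk\to D$ in cohomological degree $0$ (where the induced map is automatically nonzero), then compare coefficients via Lemma~\ref{SeriesProd1}. Your care in checking that the theorem's hypothesis $m\geq1$ forces the separate (trivial) treatment of $n=0$ is exactly right.
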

We raise the following question: can the complete intersection hypothesis be removed from Theorem \ref{main2}? More specifically:
\begin{question}\label{question}
If $\beta\colon \Omega^mk\rightarrow N$ is an $R$-module homomorphism of finitely generated graded $R$-modules such that for some $n$ the map
\[
\beta^n\colon\ext_R^n(N,k)\rightarrow\ext_R^n(\Omega^mk,k)
\]
is not zero and $N$ has finite regularity, then is $R$ Koszul?
\end{question}

\section{On the vanishing of the graded deviations}
In this section, we relate Question \ref{question} and the vanishing
of the graded deviations of $R$. Let $R\langle X\rangle$ be an
acyclic closure of $k$ (see \cite[6.3]{infinite} for the
definition). Over graded rings, we require the differential of the
acyclic closure to be a homogeneous map. For the
differential to be homogeneous, we must give the elements of $X$ an
internal grading, making $X$ a bigraded set.
\begin{definition}
The $(i,j)$th graded deviation of $R$ is
\[
\varepsilon_{i,j}(R):=\mathrm{Card}(X_{i,j}),
\]
where $X_{i,j}$ is the set of variables in the acyclic closure of homological degree $i$ and internal degree $j$.
\end{definition}

\begin{theorem}
If $M=\Omega^mk$ for $m\geq1$ and $\beta\colon M\rightarrow N$ is a homomorphism of finitely generated graded $R$-modules such that for some $n$, the map induced by $\beta$,
\[
\beta^n\colon\ext_R^n(N,k)\rightarrow\ext_R^n(M,k)
\]
is not zero and $\reg N<\infty$, then $\varepsilon_{i,j}(R)=0$ for $i>m+n$ and $i\neq j$.
\end{theorem}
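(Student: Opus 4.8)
The plan is to carry the argument of Theorem~\ref{main2} as far as it goes without the complete intersection hypothesis, and then to replace the input ``$\mk$ is a polynomial ring'' (which is where that hypothesis entered) by the Poincar\'{e}-Birkhoff-Witt structure of $\me=\ext_R(k,k)$ as the enveloping algebra of the homotopy Lie algebra. Recall that $\varepsilon_{i,j}(R)=\dim_k\pi^{i,j}(R)$ and $\pi^{i,j}(R)\subseteq\me^{i,j}$, so it suffices to prove $\pi^{i,j}(R)=0$ for $i>m+n$ and $i\neq j$; this will follow once we show that $\reg N<\infty$ forces a suitable cyclic $\me$-submodule of $\me^{\geq m+n}$ to lie in a bounded strip along the regularity diagonal.

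First I would set $N^\prime=\Omega^n N$ and $M^\prime=\Omega^{m+n}k$ and construct, exactly as in the proof of Theorem~\ref{main2}, the $\me$-linear composite
\[
\xi^*\colon\me\otimes_k(\ov N^\prime)^\vee\cong\me(\ov N^\prime)\xra{\pi^*}\me(N^\prime)\xra{\beta^{\prime *}}\me(M^\prime)\xra{\alpha^*}\Sigma^{m+n}\me^{\geq m+n},
\]
with $\alpha^*$ an iterated connecting isomorphism and $\xi^0\neq0$. Since $\pi^*$ and $\beta^{\prime *}$ are homomorphisms of bigraded $\me$-modules and $\alpha^*$ is a bigraded isomorphism, the image $J=\im\xi^*$ is, as a bigraded vector space, a subquotient of $\me(N^\prime)$, so $H_J\preceq H_{\me(N^\prime)}$; and $J$ is a left $\me$-submodule of $\Sigma^{m+n}\me^{\geq m+n}$, which, transported through $\alpha^*$, is a left $\me$-submodule of the left ideal $\me^{\geq m+n}$ of $\me$, while $\xi^0\neq0$ supplies a nonzero element $\theta$ of it lying in $\me^{m+n}$. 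Because $\reg N<\infty$ we have $\reg N^\prime<\infty$, so $\beta_{i,j}^R(N^\prime)=0$ for $j-i>\reg N^\prime$; together with the lower bound on the internal degrees in a minimal free resolution this puts $\me(N^\prime)$, hence $J$, in a strip of finite width along the diagonal, and therefore the cyclic submodule $\me\theta\subseteq\me^{\geq m+n}$ has bounded ``height'' $j-i$.

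Now fix $\gamma\in\pi^{i,j}(R)$ with $i>m+n$ and $i\neq j$; since $\me^{a,b}=0$ for $b<a$ (the $a$th term of the minimal free resolution of $k$ is generated in degrees $\geq a$), we have $j>i$, and we must show $\gamma=0$. If $i$ is even, then $\gamma$ lies in the sub-Lie-algebra of even-degree elements, whose universal enveloping algebra is a domain and over which $\me$ is free by Poincar\'{e}-Birkhoff-Witt; hence $\gamma^r\neq0$ for all $r$ and left multiplication by $\gamma$ on $\me$ is injective. But the nonzero terms among $\gamma^r\theta\in\me\theta$ have height $r(j-i)+\mathrm{ht}(\theta)\to\infty$, incompatible with the strip, so $\gamma^{r_0}\theta=0$ for a least $r_0\geq1$; then $\gamma$ annihilates from the left the nonzero element $\gamma^{r_0-1}\theta\in\me$, a contradiction. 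If $i$ is odd and $[\gamma,\gamma]\neq0$, then $[\gamma,\gamma]\in\pi^{2i,2j}(R)$ is an off-diagonal even deviation in homological degree $2i>m+n$, excluded by the previous case; so $[\gamma,\gamma]=0$, whence $\gamma^2=0$ in $\me$ (in characteristic $\neq2$) and $\ker(\gamma\cdot)=\gamma\me\subseteq\me^{\geq i}$. Since $i>m+n=\deg\theta$, the relation $\gamma\theta=0$ would force $\theta\in\gamma\me\cap\me^{m+n}=0$, impossible; hence $\gamma\theta\neq0$, and at this point the strip only tells us that $j-i$ is bounded.

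Completing this last case is the main obstacle: one must rule out an odd-degree off-diagonal deviation $\gamma$ in homological degree $>m+n$ with $[\gamma,\gamma]=0$ (and its characteristic-$2$ analogue, where the divided-power structure on the homotopy Lie algebra enters and the enveloping algebra is no longer a domain), and here multiplication by $\gamma$ fails to be injective, so the ``escape the strip by taking powers'' mechanism breaks down. I expect to finish by feeding the already-proved vanishing of even off-diagonal deviations in homological degree $>m+n$ back into the Lie algebra: such a $\gamma$ then super-commutes with every odd deviation (its bracket with an odd element would be an off-diagonal even deviation of homological degree $>m+n$), so the ideal it generates in $\pi^{\geq m+n}(R)$ is abelian and concentrated in homological degrees $>m+n$, from which a contradiction should follow, either after enlarging $\theta$ to the full image of $\xi^0$ to gain room inside $\me\theta$, or by a direct structural argument on $\pi^{\geq m+n}(R)$. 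This is also why one obtains the conclusion ``$\varepsilon_{i,j}(R)=0$ for $i>m+n$, $i\neq j$'' rather than ``$R$ is Koszul'' (cf. Question~\ref{question}), and the exact cut-off $i>m+n$ comes from $\theta$ lying in $\me^{m+n}$ together with $\reg N^\prime\leq\reg N+n$.
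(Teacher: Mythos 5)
Your construction of $\xi^*$, the bound on $\im\xi^*$ coming from $\reg N^\prime<\infty$, and the treatment of an even off-diagonal $\gamma\in\pi^{i,j}(R)$ with $i>m+n$ all match the paper's argument in substance, with one caveat: in positive characteristic the ordinary powers $\gamma^{r}$ vanish for $r\geq p$ (since $\gamma^{r}=r!\,\gamma^{(r)}$ and the homotopy Lie algebra carries divided powers), so the enveloping algebra of the even part is not a domain and left multiplication by $\gamma$ is not injective. The paper instead uses the divided powers $x^{(l)}$, which are nonzero Poincar\'e--Birkhoff--Witt basis elements of $\mv$ (the universal enveloping algebra of $\pi^{>m+n,*}(R)$), together with the fact that $\me(N^\prime)$ contains a \emph{free} $\mv$-module generated in homological degree $m+n$ --- establishing that freeness is exactly the point of the change-of-basis argument with $\xi^0(\nu_1),\ldots,\xi^0(\nu_{q^\prime})$. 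The genuine gap, however, is the one you flag yourself: an odd off-diagonal deviation in homological degree $>m+n$ whose self-bracket vanishes. Your proposed finish cannot work as stated: an abelian Lie ideal concentrated in odd degrees simply contributes an exterior-algebra tensor factor to $\me$ by PBW, which is entirely consistent with everything you have derived from $\reg N^\prime<\infty$, so no contradiction is available from the structure of $\pi^{\geq m+n,*}(R)$ or from the cyclic module $\me\theta$; enlarging $\theta$ to all of $\im\xi^0$ gains nothing, since $\im\xi^0$ is finite-dimensional and concentrated in a single homological degree.

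The paper resolves the odd case with two ideas absent from your proposal. First, if there are infinitely many odd off-diagonal deviations in degrees $>m+n$, it takes products $x_1x_2\cdots x_s$ of distinct odd elements: these are nonzero PBW basis elements of $\mv$, hence yield nonzero classes in the copy of $\mv$ inside $\me(N^\prime)$, and since each factor is off-diagonal one has $j_t-i_t\geq1$, so the drift $\sum_t j_t-\sum_t i_t\geq s$ is unbounded, contradicting $\reg N^\prime<\infty$. Second, if there are finitely many (but at least one), the argument leaves the Ext-algebra altogether and works in the acyclic closure $R\langle X\rangle$: choose an odd off-diagonal variable $y$ of maximal homological degree; by \cite[Lemma 7]{AP} deleting \emph{all} odd off-diagonal variables gives a DG subalgebra, and maximality of $|y|$ then shows that $A=R\langle X\setminus\{y\}\rangle$ is already a DG subalgebra, so the acyclic closure is $A\langle y\rangle$ and \cite[Lemma 6]{AP} forces $|y|=1$, contradicting $|y|>m+n\geq1$. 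This passage to the acyclic closure is the essential missing step; without it (or a substitute for it) the theorem is not proved.
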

\begin{proof}
We set the following notation
\[
N^\prime=\Omega^nN, \quad \ov N^\prime=N^\prime/\fm N^\prime, \quad M^\prime=\Omega^{m+n}k.
\]
Let $\pi$ be the canonical projection $N^\prime\rightarrow N^\prime/\fm N^\prime$ and $\beta^\prime$ be a morphism $M^\prime\rightarrow N^\prime$ obtained by extending $\beta$ to a morphism of free resolutions.
Let $\mv$ be the universal enveloping algebra of $\pi^{>m+n,\_}(R)$.
Let $\xi^*$ denote the composed map
\[
\xi^*:\me(\ov N^\prime)\xra{\pi^*}\me(N^\prime)\xra{\beta^{\prime
*}}\me(M^\prime)\xra{\alpha^*}\Sigma^{m+n}\me^{\geq m+n,\_}.
\]

Consider the following commutative diagram
\[
\xymatrixcolsep{6.5pc}
\xymatrixrowsep{3pc}
\xymatrix{
\me(\ov N^\prime) \ar@{->}[r]^{\xi^*}&\Sigma^{m+n}\me^{\geq m+n,\_}\\
(\ov N^\prime)^\vee\ar@{->}[u]\ar@{->}[r]^{\xi^0}&\me^{m+n,\_}\ar@{->}[u]
}
\]

Let $\theta_1,\ldots,\theta_p$ be a $k$-basis of $\pi^{<m+n,\_}(R)$ with $|\theta_1|\leq\cdots\leq|\theta_p|$. By the Poincar\'{e}-Birkhoff-Witt theorem, a $\mv$-basis of
$\me^{\geq m+n,\_}$ is given by

\begin{equation} \label{pbw.2}
\{\theta_1^{(i_1)}\cdots\theta_p^{(i_p)}\mid i_j\leq
1\mathrm{\;if\;}|\theta_j|\mathrm{\;odd\;},
\sum_ji_j|\theta_j|\geq m+n\}.
\end{equation}

The $\mv$-module $\im\xi^* |_{\mv\otimes_k(\ov N')^\vee}$ is generated
by the elements $\xi^0(\nu_h)$, where
\[
\nu_1,\ldots,\nu_q
\]
is a $k$-basis of $(\ov N')^\vee$. We set
\[
\xi^0(\nu_h)=\sum
a_{i_1,\ldots,i_p}\theta_1^{(i_1)}\cdots\theta_p^{(i_p)},\quad
a_{i_1,\ldots,i_p}\in k.
\]
We can change the basis of $(\ov N')^\vee$ so that the coordinate vectors of
\[
\xi^0(\nu_1),\ldots,\xi^0(\nu_{q'}),
\]
with respect to the basis \eqref{pbw.2}, are linearly independent over $k$ and
\[
\xi^0(\nu_{q'+1}),\ldots,\xi^0(\nu_{q})
\]
are all zero. Because the elements $\theta_1^{(i_1)}\cdots\theta_p^{(i_p)}$ form a basis of $\me^{\geq  m+n,\_}$ over $\mv$, we deduce that $\xi^0(\nu_1),\ldots,\xi^0(\nu_{q'})$ are also linearly independent over $\mv$.

Therefore
\[
\im\xi^* |_{\mv\otimes_k(\ov N')^\vee}=\mv\otimes\im\xi^0.
\]

This means that $\im\xi^*$ contains a copy of $\mv$ and by the construction of $\xi^*$ so does $\me(N')$.

Now, we recall that by \cite[Theorem 10.2.1]{infinite} $\dim_k\pi^{i,j}(R)=\varepsilon_{i,j}(R)$.

If there is an even $i>m+n$ such that $\varepsilon_{i,j}(R)\neq0$
and $i\neq j$, then there is a nonzero element $x\in\pi^{i,j}(R)$.
The powers of $x$ belong to $\mv$. However, a copy of $\mv$ is contained
in $\me(N')$. Because $\mathrm{bideg}\;x=(i,j)$, then
$\mathrm{bideg}\;x^{(l)}=(li,lj)$ and $lj-li=l(j-i)$ goes to
$\infty$ as $l$ goes to $\infty$. This implies that
$\reg N^\prime=\infty$, which is a contradiction.

If there are infinitely many $i$ with $i>m+n$, $i$ odd, $i\neq0$,
and $\varepsilon_{i,j}(R)\neq0$, then there are infinitely many
nonzero $x_t$ with $t=1,2,\ldots$ belonging to $\pi^{i_t,j_t}(R)$.
The products $x_1x_2\cdots x_s$ belong to $\mv$. However, a copy of $\mv$
is contained in $\me(N')$. The bidegree of this product
is
\[
\mathrm{bideg}\;x_1x_2\cdots x_s=(i_1+i_2+\cdots+i_s,j_1+j_2+\cdots+j_s),
\]
and because $j_t-i_t\geq1$ for every $t$, the following inequality holds
\[
j_1+j_2+\cdots+j_s-(i_1+i_2+\cdots+i_s)\geq s,
\]
and as $s$ goes to $\infty$, we obtain $\reg N^\prime=\infty$, which is a contradiction.

Now, we assume that $\varepsilon_{i,j}(R)\neq0$ for finitely many
$i$ with $i>m+n$, $i$ odd, and $i\neq j$. Let $R\langle X\rangle$
be the acyclic closure of $k$. We
denote by $|.|$ the homological degree of a homogeneous element in
the acyclic closure and by $\deg$ its internal degree and we say that a homogeneous element $x$ is \emph{off-diagonal} if $|x|\neq\deg\;x$. We can choose a homogeneous element $y\in X$
that is of odd degree, off-diagonal, and of maximal homological degree. We claim that
$A=R\langle X\backslash\{y\}\rangle$ is a DG subalgebra of the
acyclic closure. Indeed, if $\tilde{x}_1,\ldots,\tilde{x}_r,y$ are all the odd
elements and off-diagonal, then, by \cite[Lemma 7]{AP}, $R\langle
X\backslash\{\tilde{x}_1,\ldots,\tilde{x}_r,y\}\rangle$ is a DG subalgebra of the
acyclic closure. To prove that $A$ is a DG subalgebra it suffices to prove that if $x\in X\backslash\{y\}$, then the differential of $x$ belongs to $A$. If $x\neq \tilde{x}_1,\ldots,\tilde{x}_r$ then this is true because $R\langle
X\backslash\{\tilde{x}_1,\ldots,\tilde{x}_r,y\}\rangle$ is a DG algebra, if $x=\tilde{x}_t$ for some $t=1,\ldots,r$ then the variable $y$ cannot appear in the differential of $x$ since, by maximality, $|y|\geq|x|$, therefore in all cases the differential of $x$ belongs to $A$ and hence $A$ is a DG subalgebra of the acyclic closure. Since $A$ is a DG algebra, the acyclic closure
of $k$ can be written as $A\langle y\rangle$, implying (by
\cite[Lemma 6]{AP}) that $|y|=1$, which is a contradiction because
$m\geq1$.
\end{proof}

\begin{proposition}
Let $d\geq4$ be an even number. If $\varepsilon_{i,j}(R)=0$ for $i\geq d$ and $i\neq j$, then $\varepsilon_{i,j}(R)=0$ for $i\geq d-1$ and $i\neq j$.
\end{proposition}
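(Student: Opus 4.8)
The plan is to argue by contradiction, recycling the acyclic–closure machinery from the proof of the preceding theorem. Suppose $\varepsilon_{i,j}(R)=0$ for $i\geq d$ and $i\neq j$, but that $\varepsilon_{d-1,j}(R)\neq0$ for some $j\neq d-1$. Fix an acyclic closure $R\langle X\rangle$ of $k$ with homogeneous differential. The hypothesis $\varepsilon_{d-1,j}(R)\neq 0$ supplies a variable $y\in X$ of homological degree $|y|=d-1$ and internal degree $\deg y=j\neq d-1$, i.e.\ $y$ is off-diagonal. Since $d$ is even, $d-1$ is odd, so $y$ has \emph{odd} homological degree; this is the single place where the parity assumption enters, and it is what lets us invoke the results of \cite{AP} on odd off-diagonal variables.

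Next I would note that, by the vanishing hypothesis, every variable of homological degree $\geq d$ is on-diagonal, so every odd off-diagonal variable of $R\langle X\rangle$ has homological degree at most $d-1$; combined with the existence of $y$, the maximal homological degree of an odd off-diagonal variable equals $d-1$, and we may take $y$ to realize this maximum. Following the proof of the previous theorem, I would then show that $A=R\langle X\setminus\{y\}\rangle$ is a DG subalgebra of the acyclic closure: writing $\tilde x_1,\dots,\tilde x_r,y$ for all the odd off-diagonal variables, $R\langle X\setminus\{\tilde x_1,\dots,\tilde x_r,y\}\rangle$ is a DG subalgebra by \cite[Lemma 7]{AP}, so it suffices to check $d(x)\in A$ for each $x\in X\setminus\{y\}$; this is automatic unless $x=\tilde x_t$, and in that case $y$ cannot occur in $d(x)$ because any monomial involving $y$ has homological degree $\geq|y|=d-1>|x|-1$ by the maximality of $|y|$.

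Since $A$ is a DG subalgebra and (for the same degree reason) $d(y)\in A$, the acyclic closure can be written as $A\langle y\rangle$, so \cite[Lemma 6]{AP} forces $|y|=1$; but $|y|=d-1\geq 3$ because $d\geq4$, a contradiction. Hence $\varepsilon_{d-1,j}(R)=0$ for all $j\neq d-1$, and together with the hypothesis this gives $\varepsilon_{i,j}(R)=0$ for $i\geq d-1$ and $i\neq j$. There is no serious obstacle here beyond faithfully adapting the earlier argument; the genuinely delicate point — and the reason one only descends a single step, and only from an even value $d$ — is that the argument requires $y$ to have odd homological degree so that \cite[Lemma 7]{AP} and the presentation $A\langle y\rangle$ are available, whereas removing a single \emph{even} off-diagonal variable is not controlled by these lemmas.
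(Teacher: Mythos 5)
Your proof is correct, but it reaches the conclusion by a route that is essentially complementary to the paper's. The paper fixes a variable $y$ with $|y|=d-1$ and splits into two cases: if $y$ occurs in the boundary of some variable $x$ (necessarily with $|x|\geq d$, hence on-diagonal by hypothesis), it writes $\partial x=a+rwy$ and runs a short chain of internal-degree (in)equalities — using the bihomogeneity of $\partial$, $\deg r\geq 1$, and $\deg\geq|\cdot|$ — to force $\deg y=|y|$ directly; only in the degenerate case where $y$ occurs in no boundary does it pass to $A\langle y\rangle$ and invoke \cite[Lemma 6]{AP}. You instead argue by contradiction and show that an off-diagonal $y$ in degree $d-1$ can occur in \emph{no} boundary at all — using \cite[Lemma 7]{AP} for the variables that are not odd and off-diagonal, and a homological-degree count for the odd off-diagonal ones, exactly as in the proof of the preceding theorem — so that every instance lands in the degenerate case and Lemma 6 finishes. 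Both arguments are sound and both use the parity of $d-1$ and the bound $d\geq 4$ in the final appeal to Lemma 6; the paper's version is more self-contained in the main case (it black-boxes only Lemma 6, not Lemma 7, and the degree computation positively identifies $\deg y$ rather than deriving a contradiction), while yours is more uniform and makes transparent that the whole proposition is a one-step specialization of the machinery already set up for the theorem before it. One small point worth noting: your explanation of why the descent is only a single step attributes it to the parity needed for Lemma 7, whereas in the paper's argument the one-step limitation is equally visible in the degree count, which needs every variable whose boundary could contain $y$ to have homological degree $\geq d$ and hence to be on-diagonal by hypothesis.
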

\begin{proof}
Let $R\langle X\rangle$ be an acyclic closure of $k$, and let $\partial$ be its differential. Let $y$ be an element of $X$ of homological degree $d-1$, we need to prove that $\deg y=d-1$. We assume that $y$ appears in a boundary of a (bi)homogeneous element $x$ with $|x|\geq d$
\[
\partial x=a+rwy,\quad a,w\in R\langle X\backslash\{y\}\rangle, r\in\fm,
\]
where $R\langle X\backslash\{y\}\rangle$ is just a subalgebra of the acyclic closure and not a DG subalgebra, and $r$ is in $\fm$ because of the minimality of the acyclic closure.

The following string of (in)equalities is observed
\begin{align*}
\deg w+\deg y &=\deg(wy)\\
              &\geq|wy|\\
              &= |x|-1=\deg x-1\\
              &=\deg w+\deg y+\deg r-1\\
              &\geq\deg w+\deg y,
\end{align*}
where $\deg r\geq1$ since $r\in\fm$ and $d_i\geq1$ for all $i=1,\ldots,e$.

We deduce that
\[
\deg w+\deg y=|w|+|y|.
\]
If $\deg y>|y|$, then $\deg w<|w|$, which is not possible; hence, $\deg y=|y|$.

If $y$ does not appear in the boundary of any element, then we can write the acyclic closure as $A\langle y\rangle$ with $A=R\langle X\backslash\{y\}\rangle$, and by \cite[Lemma 6]{AP} $|y|=1$,  hence $d=2$, which is not possible.
\end{proof}

We set $\varepsilon_i(R)=\Sigma_j\varepsilon_{i,j}(R)$. It is known that if $\varepsilon_i(R)=0$ for $i\gg0$, then $\varepsilon_i(R)=0$ for $i\geq3$, see \cite[Theorem 7.3.3]{infinite}. Motivated by this and by the previous proposition, we raise the following question
\begin{question}
If $\varepsilon_{i,j}(R)=0$ when $i\gg0$ and $i\neq j$, is it true that $\varepsilon_{i,j}(R)=0$ when $i\geq3$ and $i\neq j$?
\end{question}

\noindent
By \cite[Theorem 7.3.3]{infinite} and \cite[Theorem 7.3.2]{infinite},

1) $\varepsilon_i(R)=0$ for $i\geq 2$ if and only if $R$ is regular, and

2) $\varepsilon_i(R)=0$ for $i\geq 3$ if and only if $R$ is a complete intersection.

\noindent
By \cite[Theorem 2]{AP}, if $\varepsilon_{i,j}(R)=0$ when $i\geq2$ and $i\neq j$, then the ring $R$ is of the form $Q\otimes_kS$ with $Q$ regular and $S$ a standard graded Koszul ring. Motivated by these results, we raise the following question
\begin{question}
If $\varepsilon_{i,j}(R)=0$ when $i\geq3$ and $i\neq j$, then is the ring $R$ of the form $Q\otimes_kS$ with $Q$ a complete intersection and $S$ a standard graded Koszul ring?
\end{question}

\section*{Acknowledgements}
The author would like to thank his advisors L. Avramov and S. Iyengar for helpful conversations.

\end{document}